\newtheorem{theorem}{Theorem}
\newtheorem{lemma}{Lemma}
\begin{document}
\baselineskip=17pt

%
%

\title{\bf Pairs of square-free values of the type $\mathbf{n^2+1}$, $\mathbf{n^2+2}$}
\author{\bf S. I. Dimitrov}

\date{}

\maketitle

\begin{abstract}

In the present paper we show that there exist infinitely many
consecutive square-free numbers of the form $n^2+1$, $n^2+2$.
We also establish an asymptotic formula for the number of such square-free pairs
when $n$ does not exceed given sufficiently large positive number.\\
\quad\\
\textbf{Keywords}: Square-free numbers, Asymptotic formula, Kloosterman sum. \\
\quad\\
{\bf  2020 Math.\ Subject Classification}:  11L05 $\cdot$ 11N25 $\cdot$  11N37
\end{abstract}

\section{Notations}
\indent

Let $X$ be a sufficiently large positive number.
By $\varepsilon$ we denote an arbitrary small positive number, not necessarily the same in different occurrences.
As usual $\mu(n)$ is M\"{o}bius' function and $\tau(n)$ denotes the number of positive divisors of $n$.
Further $[t]$ and $\{t\}$ denote the integer part, respectively, the fractional part of $t$.
We shall use the convention that a congruence, $m\equiv n\,\pmod {d}$ will be written as $m\equiv n\,(d)$.
As usual $(m,n)$ is the greatest common divisor of $m$ and $n$.
The letter $p$  will always denote prime number.
We put
\begin{equation}\label{psit1}
\psi(t)=\{t\}-1/2\,.
\end{equation}
Moreover $e(t)$=exp($2\pi it$).
For $x, y \in\mathbb{R}$ we write $x\equiv y\,(1)$ when $x-y\in\mathbb{Z}$.
For any $n$ and $q$ such that $(n, q)=1$ we denote by $\overline{n}_q$
the inverse of $n$ modulo $q$.
The number of distinct prime factors of a natural number $n$ we denote by $\omega(n)$.
For any odd prime number $p$ we denote by $\left(\frac{\cdot}{p}\right)$  the Legendre symbol.
By $K(r,h)$ we shall denote the incomplete Kloosterman sum
\begin{equation}\label{Kloosterman}
K(r,h)=\sum\limits_{\alpha\leq x< \beta\atop{(x, r)=1}}e\left(\frac{h\overline{x}_{|r|}}{r}\right)\,,
\end{equation}
where
\begin{equation*}
h, r\in\mathbb{Z}, \quad hr\neq 0, \quad 0<\beta-\alpha\leq2|r|.
\end{equation*}

\section{Introduction and statement of the result}
\indent

In 1931 Estermann \cite{Estermann1} proved that there exist infinitely many
square-free numbers of the form $n^2+1$.
More precisely he proved that for $X \geq 2$  the asymptotic formula
\begin{equation*}
\sum\limits_{n\leq X}\mu^2(n^2+1)=c_0X+\mathcal{O}\left(X^{\frac{2}{3}}\log X\right)
\end{equation*}
holds.
Here
\begin{equation*}
c_0=\prod\limits_{p\equiv1\, (4 )}\left(1-\frac{2}{p^2}\right)\,.
\end{equation*}
Afterwards Heath-Brown \cite{Heath-Brown2} used a variant of the determinant method
and improved the remainder term in the formula of Estermann
with $\mathcal{O}\left(X^{7/12+\varepsilon}\right)$.

On the other hand  1932  Carlitz \cite{Carlitz} showed that  there exist infinitely many
pairs of consecutive square-free numbers.
More precisely he proved the asymptotic formula
\begin{equation}\label{Carlitz}
\sum\limits_{n\leq X}\mu^2(n)\mu^2(n+1)=\prod\limits_{p}\left(1-\frac{2}{p^2}\right)X
+\mathcal{O}\big(X^{\theta+\varepsilon}\big)\,,
\end{equation}
where $\theta=2/3$.
Formula \eqref{Carlitz} was sharpened by Heath-Brown \cite{Heath-Brown1}
to $\theta=7/11$ and by Reuss \cite{Reuss} to $\theta=(26+\sqrt{433})/81$.

The existence of  infinitely many consecutive square-free numbers
of a special form was demonstrated by the author in
\cite{Dimitrov1}, \cite{Dimitrov2},  \cite{Dimitrov3}, \cite{Dimitrov4}.
In particular in \cite{Dimitrov4} he proved that there exist infinitely many
consecutive square-free numbers of the form $x^2+y^2+1$, $x^2+y^2+2$.
While in \cite{Dimitrov4} the main role was played by the properties of Gauss sums,
in this paper we use a bijective correspondence between the number of
representations of numbers by binary quadratic form and the
the incongruent solutions of quadratic congruence.

Define
\begin{equation}\label{GammaX}
\Gamma(X)=\sum\limits_{1\leq n\leq X}\mu^2(n^2+1)\,\mu^2(n^2+2)\,,
\end{equation}
\begin{equation}\label{Sq1q2}
S(q_1, q_2)=\{ n\in\mathbb{N }\; : \; 1\leq n \leq q_1q_2, \;\;  n^2+1\equiv 0\,(q_1),\;\; n^2+2\equiv 0\,(q_2) \}
\end{equation}
and
\begin{equation}\label{lambdaq1q2}
\lambda(q_1, q_2)=\sum\limits_{n\in S(q_1, q_2)}1\,.
\end{equation}
We establish our result by combining the tasks of Estermann and Carlitz.
Thus we prove the following theorem.
\begin{theorem}\label{Theorem1}  For the sum $\Gamma(X)$ defined by \eqref{GammaX} the asymptotic formula
\begin{equation}\label{asymptoticformula}
\Gamma(X)=\sigma X+\mathcal{O}\left(X^{\frac{8}{9}+\varepsilon}\right)
\end{equation}
holds. Here
\begin{equation}\label{sigmaproduct}
\sigma=\prod\limits_{p>2}\left(1-\frac{\left(\frac{-1}{p}\right)+\left(\frac{-2}{p}\right)+2}{p^2}\right)\,.
\end{equation}
\end{theorem}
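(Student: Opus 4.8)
\section*{Proof plan for Theorem \ref{Theorem1}}

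The starting point is the truncated squarefree identity $\mu^2(m)=\sum_{d^2\mid m}\mu(d)$ applied to \emph{both} factors in \eqref{GammaX}: for a parameter $z$ (to be fixed of order $X^{5/9}$) write $\mu^2(n^2+i)=\sum_{d_i\le z,\ d_i^2\mid n^2+i}\mu(d_i)+\sum_{d_i> z,\ d_i^2\mid n^2+i}\mu(d_i)$ for $i=1,2$ and multiply out. The ``balanced'' term is
\begin{equation*}
\mathcal{M}(X)=\sum_{d_1\le z}\sum_{d_2\le z}\mu(d_1)\mu(d_2)\,T(d_1,d_2;X),\qquad
T(d_1,d_2;X):=\#\{1\le n\le X:\ d_1^2\mid n^2+1,\ d_2^2\mid n^2+2\},
\end{equation*}
where only odd, mutually coprime $d_1,d_2$ contribute ($n^2\equiv-1,-2\ (4)$ are impossible, and a common prime divisor would divide $1$). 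The remaining pieces all involve an unbalanced divisor $d_i>z$ with $d_i^2\mid n^2+i\le X^2+2$; writing $n^2+i=d_i^2r$ forces $r<X^2/z^2$, and for each fixed non-square $r$ the equation $n^2-rd_i^2=-i$ is of Pell type with $\mathcal{O}(\log X)$ solutions $n\le X$ (none when $r$ is a square). This is Estermann's device; together with the trivial bound $\tau(n^2+j)\ll X^{\varepsilon}$ on the complementary factor it bounds the total of these pieces by $\mathcal{O}\!\left(X^{2+\varepsilon}z^{-2}\right)$.

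For $\mathcal{M}(X)$ I would open $T$ by the Chinese Remainder Theorem applied to the moduli $d_1^2,d_2^2$: the system $n^2\equiv-1\ (d_1^2)$, $n^2\equiv-2\ (d_2^2)$ has $\lambda(d_1^2,d_2^2)$ solutions modulo $q:=d_1^2d_2^2$, and each solution $\nu$ satisfies $\nu/q\equiv n_1\overline{(d_2^2)}_{d_1^2}/d_1^2+n_2\overline{(d_1^2)}_{d_2^2}/d_2^2\ (1)$ with $n_1^2\equiv-1\ (d_1^2)$, $n_2^2\equiv-2\ (d_2^2)$. Counting $1\le n\le X$ in each class gives $T(d_1,d_2;X)=\lambda(d_1^2,d_2^2)X/q+R(d_1,d_2;X)$, with $R$ built from values of $\psi$ at the points $(X-\nu)/q$. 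Extending the two $d$-ranges to infinity costs $\mathcal{O}\!\left(X^{1+\varepsilon}z^{-1}\right)$ (using $\lambda\ll(d_1d_2)^{\varepsilon}$), and the resulting series $\sum_{d_1,d_2}\mu(d_1)\mu(d_2)\lambda(d_1^2,d_2^2)/q$ is absolutely convergent and multiplicative. Its Euler factor at an odd prime $p$ is $1-\big(\lambda(p^2,1)+\lambda(1,p^2)\big)p^{-2}$ — the $(d_1,d_2)=(p,p)$ term vanishes because $\lambda(p^2,p^2)=0$ — while Hensel's lemma gives $\lambda(p^2,1)=1+\left(\frac{-1}{p}\right)$ and $\lambda(1,p^2)=1+\left(\frac{-2}{p}\right)$; at $p=2$ the factor is $1$. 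Multiplying, this is exactly $\sigma$ of \eqref{sigmaproduct}, so the main term is $\sigma X$.

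It remains to estimate $\sum_{d_1\le z}\sum_{d_2\le z}\mu(d_1)\mu(d_2)R(d_1,d_2;X)$. Expanding each $\psi$ into a truncated Fourier series of length $H$ (the complementary error being handled by sums of the same shape) and using the displayed form of $\nu$, one is led to
\begin{equation*}
\sum_{d_1\le z}\sum_{d_2\le z}\mu(d_1)\mu(d_2)\sum_{0<|h|\le H}\frac{e(hX/q)}{2\pi i h}\Bigl(\sum_{n_1^2\equiv -1\,(d_1^2)}e\bigl(-h n_1\overline{(d_2^2)}_{d_1^2}/d_1^2\bigr)\Bigr)\Bigl(\sum_{n_2^2\equiv -2\,(d_2^2)}e\bigl(-h n_2\overline{(d_1^2)}_{d_2^2}/d_2^2\bigr)\Bigr).
\end{equation*}
Here the factor $\overline{(d_2^2)}_{d_1^2}$ inside the $n_1$-sum couples the two divisor variables; fixing $h,d_1,n_1$ and summing over $d_2$ (the $n_2$-sum being an elementary Salié-type sum of length $\ll d_2^{\varepsilon}$, which after isolating a branch of the square root becomes a weight to be removed by partial summation), the character $e\bigl(-h n_1\overline{(d_2^2)}_{d_1^2}/d_1^2\bigr)$ turns the $d_2$-sum into an incomplete Kloosterman sum $K(r,h')$ with modulus $r=d_1^2$ as in \eqref{Kloosterman}, and $e(hX/q)$ supplies the truncation of the interval. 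Weil's bound together with completion, $K(r,h')\ll|r|^{1/2+\varepsilon}(h',|r|)^{1/2}$, then delivers square-root cancellation; summing the outcome over $d_1\le z$, over $n_1$, and over $0<|h|\le H$ with $H$ a small power of $X$ bounds this contribution by $X^{8/9+\varepsilon}$. Collecting the three estimates $X^{2+\varepsilon}z^{-2}$, $X^{1+\varepsilon}z^{-1}$ and the Kloosterman term and balancing at $z\asymp X^{5/9}$ yields \eqref{asymptoticformula}.

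The genuinely delicate point — everything else being the Carlitz-style bookkeeping of \eqref{Carlitz} adapted to polynomial arguments — is this last step: the exponential sum carries \emph{two} intertwined Kloosterman-type structures (the inverse of $d_2^2$ modulo $d_1^2$ inside the $n_1$-sum, and the inverse of $d_1^2$ modulo $d_2^2$ inside the $n_2$-sum), together with the quadratic-root weights and the $h$-average, so extracting enough cancellation requires performing the completion/Weil step carefully in the appropriate variable and controlling the resulting gcd factors, divisor functions and the sum over $h$ so that no subsum exceeds $X^{8/9+\varepsilon}$ even though $q=d_1^2d_2^2$ can be as large as $z^4$.
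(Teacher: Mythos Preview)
Your overall architecture (squarefree identity, main term via CRT and multiplicativity, Euler product yielding $\sigma$) matches the paper. The divergence is in the error analysis, and there the proposal has a genuine gap.

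The paper does \emph{not} cut each $d_i$ at $z$; it cuts at $d_1d_2\le z$. That makes the remainder $\sum_{d_1d_2\le z}|R(d_1,d_2;X)|$ trivially $\ll zX^{\varepsilon}$ (there are only $\ll z\log z$ pairs), and the entire effort goes into the complementary range $d_1d_2>z$. There the paper \emph{decouples}: after dyadic localisation it bounds the contribution by $X^{\varepsilon}\min(\Sigma_1,\Sigma_2)$ where $\Sigma_i=\sum_{n\le X}\sum_{D_i\le d_i<2D_i,\ d_i^2\mid n^2+i}1$ involves a single divisor variable. Each $\Sigma_i$ is then treated by Hooley's method: the sum over roots $n$ of $n^2+i\equiv0\,(d_i)$ is converted, via the bijection between such roots and primitive representations $d_i=u^2+iv^2$ (Lemma~\ref{Surjection} for $i=2$, the classical Gaussian integer lemma for $i=1$), into a sum over $(u,v)$; partial summation in $u$ for fixed $v$ then produces an honest incomplete Kloosterman sum $\sum_u e(m\overline{u}/v)$. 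This gives $\Sigma_i\ll X^{1+\varepsilon}D_i^{-1/4}$, hence $\Gamma_2\ll X^{1+\varepsilon}z^{-1/8}$, and balancing against $zX^{\varepsilon}$ forces $z=X^{8/9}$.

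Your route instead keeps both $d_i\le z$, handles the tails by the Pell device (that part is fine and gives $X^{2+\varepsilon}z^{-2}$), but must then extract cancellation from the coupled sum $\sum_{d_1,d_2\le z}\mu(d_1)\mu(d_2)R(d_1,d_2;X)$, since the trivial bound is $z^2X^{\varepsilon}$. The step where you assert that summing over $d_2$ yields an incomplete Kloosterman sum to modulus $d_1^2$ does not hold: the exponential is $e\bigl(-hn_1\,\overline{(d_2^2)}_{d_1^2}/d_1^2\bigr)$, so the summation variable enters as $\overline{d_2^2}=(\overline{d_2})^2$, which is a quadratic (Gauss-type) exponential sum, not of the shape \eqref{Kloosterman}, and Lemma~\ref{Weilsestimate} does not apply. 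More seriously, the inner weight $\sum_{n_2^2\equiv-2\,(d_2^2)}e\bigl(-hn_2\overline{(d_1^2)}_{d_2^2}/d_2^2\bigr)$ is not a smooth function of $d_2$: the roots $n_2\bmod d_2^2$ jump erratically as $d_2$ varies, and one cannot ``isolate a branch'' and remove it by partial summation. The mechanism that tames these roots is precisely the parametrisation $d_2=u^2+2v^2$ of Lemma~\ref{Surjection} (so that $n_2/d_2\equiv u\overline{v}/(u^2+2v^2)\pmod 1$ and the $(u,v)$-variables become free), which your outline omits entirely. Without that input the bound $X^{8/9+\varepsilon}$ for the $R$-sum is unsubstantiated, and the balancing at $z\asymp X^{5/9}$ collapses.
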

From Theorem \ref{Theorem1} it follows that there exist infinitely many
consecutive square-free numbers of the form $n^2+1$, $n^2+2$, where $n$ runs over naturals.

\section{Lemmas}
\indent

The first lemma we need gives us important expansions.
\begin{lemma}\label{expansion}
For any $M\geq2$, we have
\begin{equation*}
\psi(t)=-\sum\limits_{1\leq|m|\leq M}\frac{e(mt)}{2\pi i m}
+\mathcal{O}\big(f_M(t)\big)\,,
\end{equation*}
where $f_M(t)$ is a positive function of $t$ which is infinitely many
times differentiable and periodic with period 1.
It can be expanded into the Fourier series
\begin{equation*}
f_M(t)=\sum\limits_{m=-\infty}^{+\infty}b_{M}(m)e(m t)\,,
\end{equation*}
with coefficients $b_{M}(m)$ such that
\begin{equation*}
b_{M}(m)\ll\frac{\log M}{M}\quad \mbox{for all}\quad m
\end{equation*}
and
\begin{equation*}
\sum\limits_{|m|>M^{1+\varepsilon}}|b_{M}(m)|\ll M^{-A}\,.
\end{equation*}
Here $A > 0$ is arbitrarily large and the constant in the $\ll$ - symbol depends on $A$ and $\varepsilon$.
\end{lemma}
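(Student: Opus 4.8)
The plan is to realize $f_M$ as a nonnegative, infinitely smooth majorant of the error committed by truncating the Fourier series of $\psi$, and then to exploit nonnegativity as the single device that controls all the Fourier coefficients. Write $S_M(t)=-\sum_{1\le|m|\le M}\frac{e(mt)}{2\pi im}$ for the truncated series and let $\|t\|$ denote the distance from $t$ to the nearest integer. First I would record the classical truncation estimate
\[
\Big|\psi(t)-S_M(t)\Big|\ll R_M(t):=\min\Big(1,\tfrac{1}{M\|t\|}\Big),
\]
which follows from $\psi(t)=-\sum_{m\ne0}\frac{e(mt)}{2\pi im}$ by partial summation against the Dirichlet kernel: the $1$ comes from the uniform boundedness of the Fourier partial sums of $\psi$, which is of bounded variation, and the $\frac{1}{M\|t\|}$ from $\big|\sum_{M<m\le N}e(mt)\big|\ll\|t\|^{-1}$. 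A direct computation gives $\int_0^1 R_M(t)\,dt\ll \frac{\log M}{M}$, which already explains the shape of the claimed bound $b_M(m)\ll\frac{\log M}{M}$.

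Next I would construct $f_M$ by smoothing a slightly widened majorant of $R_M$. Fix once and for all an even, nonnegative $w\in C_c^\infty(\mathbb R)$ supported in $[-1,1]$ with $\int w=1$; since $w$ is compactly supported and infinitely differentiable, its Fourier transform satisfies $|\widehat w(\xi)|\ll_K (1+|\xi|)^{-K}$ for every $K$. Put $\delta=1/M$, let $W_\delta(t)=\sum_{n\in\mathbb Z}\delta^{-1}w\big((t+n)/\delta\big)$ be the associated $1$-periodic approximate identity, so that $\widehat{W_\delta}(m)=\widehat w(m/M)$ and $\int_0^1 W_\delta=1$, and set $h(t)=\sup_{|u|\le\delta}R_M(t-u)$ and $f_M=h*W_\delta$ (convolution on $\mathbb R/\mathbb Z$). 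Because $h\ge0$ and $W_\delta\ge0$ the function $f_M$ is nonnegative, and infinite differentiability, periodicity and strict positivity are inherited from $W_\delta$ and from $R_M>0$. Choosing $u=-s$ in the supremum gives $h(t-s)\ge R_M(t)$ for $|s|\le\delta$, whence $f_M(t)\ge\big(\int_{-\delta}^{\delta}W_\delta\big)R_M(t)\gg R_M(t)$; thus $O(f_M)$ genuinely absorbs $\psi-S_M$.

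Finally I would read off the coefficient bounds, where nonnegativity is the crucial point. For every $m$,
\[
|b_M(m)|=\Big|\int_0^1 f_M(t)e(-mt)\,dt\Big|\le\int_0^1 f_M(t)\,dt=b_M(0),
\]
and since $\int_0^1 f_M=\widehat h(0)\,\widehat{W_\delta}(0)=\|h\|_1\ll\frac{\log M}{M}$ (the widening changes $\int R_M$ only by $O(\delta)$), this yields $b_M(m)\ll\frac{\log M}{M}$ for all $m$ at once. For the tail I would factor $b_M(m)=\widehat h(m)\,\widehat w(m/M)$; using $|\widehat h(m)|\le\|h\|_1\ll1$ and $|\widehat w(m/M)|\ll_K (|m|/M)^{-K}$, a summation over $|m|>M^{1+\varepsilon}$ gives $\sum_{|m|>M^{1+\varepsilon}}|b_M(m)|\ll M^{1-\varepsilon(K-1)}$, which is $\ll M^{-A}$ once $K$ is chosen large in terms of $A$ and $\varepsilon$.

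The main obstacle is arranging a single function that meets all four requirements simultaneously: the uniform bound $b_M(m)\ll\frac{\log M}{M}$ and the rapid tail decay pull in opposite directions, the former wanting $f_M$ as flat as $R_M$ and the latter wanting $f_M$ very smooth. The resolution is to insist on nonnegativity, so that the uniform bound collapses to the single mean value $b_M(0)$, while performing the smoothing at the natural scale $\delta=1/M$ with a fixed $C_c^\infty$ kernel, so that $\widehat{W_\delta}$ is essentially concentrated in $|m|\lesssim M$ yet decays faster than any power beyond it. Verifying that the widened supremum $h$ still has $L^1$-norm $\ll\frac{\log M}{M}$ and that convolution does not destroy the pointwise domination $f_M\gg R_M$ is the one place where the estimates must be handled with care.
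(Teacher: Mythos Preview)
The paper does not actually prove this lemma; it simply cites Tolev~\cite{Tolev1}. So there is no in-paper argument to compare against, and your proposal supplies a genuine self-contained proof where the paper offers none.

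Your construction is correct. The classical truncation bound $|\psi(t)-S_M(t)|\ll\min(1,(M\|t\|)^{-1})=:R_M(t)$ is standard, and the idea of mollifying a widened majorant $h(t)=\sup_{|u|\le 1/M}R_M(t-u)$ by a fixed $C_c^\infty$ bump at scale $1/M$ neatly delivers all four properties at once. The key observation---that nonnegativity of $f_M$ collapses the uniform coefficient bound to the single inequality $|b_M(m)|\le b_M(0)=\|h\|_1\ll(\log M)/M$---is exactly the right device, and the factorization $b_M(m)=\widehat h(m)\,\widehat w(m/M)$ together with rapid decay of $\widehat w$ gives the tail estimate with room to spare. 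The verification that $\|h\|_1\ll(\log M)/M$ (splitting at $\|t\|\asymp 1/M$) and that $f_M\ge R_M$ pointwise (because $W_\delta$ is supported in $[-\delta,\delta]$) are both routine and correctly sketched.

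Two cosmetic remarks. First, the justification ``$\psi$ is of bounded variation'' for the uniform bound $|S_M|\ll 1$ is a slight detour; the direct route is Abel summation on $\sum_{m=1}^M\frac{\sin(2\pi mt)}{\pi m}$. Second, in the tail estimate you invoke $|\widehat h(m)|\le\|h\|_1\ll 1$, whereas you had already shown $\|h\|_1\ll(\log M)/M$; using the sharper bound only helps, but either way the conclusion $\sum_{|m|>M^{1+\varepsilon}}|b_M(m)|\ll M^{-A}$ follows once $K$ is large in terms of $A$ and $\varepsilon$. Neither point affects correctness.
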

\begin{proof}
See (\cite{Tolev1}, Theorem 1).
\end{proof}
The next lemma we need is well-known.
\begin{lemma}\label{Wellknown}
Let $A, B\in\mathbb{Z}\setminus \{0\}$ and $(A, B)=1$. Then
\begin{equation*}
\frac{\overline{A}_{|B|}}{B}+\frac{\overline{B}_{|A|}}{A}\equiv \frac{1}{AB}\,\,(\,1\,).
\end{equation*}
\end{lemma}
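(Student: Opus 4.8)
The plan is to clear denominators and reduce the stated congruence to a single divisibility statement, which then follows from the two defining congruences of the modular inverses together with coprimality. Write $a=\overline{(A)}_{|B|}$ and $b=\overline{(B)}_{|A|}$, so that by definition $aA\equiv 1\,(|B|)$ and $bB\equiv 1\,(|A|)$. Since the assertion $\frac{a}{B}+\frac{b}{A}\equiv\frac{1}{AB}\,(1)$ means precisely that $\frac{a}{B}+\frac{b}{A}-\frac{1}{AB}$ is an integer, I would combine the three fractions over the common denominator $AB$, obtaining
\begin{equation*}
\frac{a}{B}+\frac{b}{A}-\frac{1}{AB}=\frac{aA+bB-1}{AB}\,.
\end{equation*}
Hence the lemma is equivalent to the divisibility $AB\mid aA+bB-1$.

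The key step is then to establish this divisibility by $|A|$ and by $|B|$ separately. First I would write $aA+bB-1=(aA-1)+bB$; from $aA\equiv 1\,(|B|)$ we have $|B|\mid aA-1$, and trivially $|B|\mid bB$, so $|B|\mid aA+bB-1$. Symmetrically, writing $aA+bB-1=aA+(bB-1)$ and using $bB\equiv 1\,(|A|)$ together with $|A|\mid aA$ gives $|A|\mid aA+bB-1$.

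Finally I would invoke the hypothesis $(A,B)=1$, which gives $(|A|,|B|)=1$, so that the two divisibilities combine to $|A|\,|B|\mid aA+bB-1$, i.e. $|AB|\mid aA+bB-1$, and therefore $AB\mid aA+bB-1$. This yields that $\frac{aA+bB-1}{AB}\in\mathbb{Z}$, which is exactly the claimed congruence $\frac{\overline{(A)}_{|B|}}{B}+\frac{\overline{(B)}_{|A|}}{A}\equiv\frac{1}{AB}\,(1)$. The argument is entirely elementary; the only point requiring a little care is the bookkeeping with absolute values, since the inverses are taken modulo $|A|$ and $|B|$ while the denominators carry the signs of $A$ and $B$, but this is harmless because $AB$ and $|AB|$ differ only by a unit and divisibility is unaffected. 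Thus there is no real obstacle beyond keeping the signs straight.
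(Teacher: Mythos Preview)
Your proof is correct: clearing denominators reduces the claim to $AB\mid aA+bB-1$, and you verify this cleanly by checking divisibility by $|A|$ and $|B|$ separately via the defining congruences of the inverses, then combining via $(|A|,|B|)=1$. The paper itself gives no proof of this lemma, merely labeling it ``well-known''; your argument is the standard elementary one and there is nothing further to compare.
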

\begin{proof}
See (\cite{Tolev2}, Lemma 17.5.1).
\end{proof}

\begin{lemma}\label{Weilsestimate}
For the sum denoted by \eqref{Kloosterman} the estimate
\begin{equation*}
K(r,h)\ll|r|^{\frac{1}{2}+\varepsilon}\,(r,h)^{\frac{1}{2}}
\end{equation*}
holds.
\end{lemma}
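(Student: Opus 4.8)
The plan is to estimate $K(r,h)$ by the classical completion technique: one expresses the indicator of the interval $[\alpha,\beta]$ through additive characters modulo $r$, which turns the incomplete sum into complete Kloosterman sums, and then one applies the Weil bound. First I would note that $K(r,h)$ depends on $r$ only through $|r|$, because for $r<0$ we have $h\,\overline{(x)}_{|r|}/r=-h\,\overline{(x)}_{|r|}/|r|$ while the conditions $\alpha\le x\le\beta$ and $(x,r)=1$ are unchanged; thus we may take $r=q>0$. The cases $\beta-\alpha<1$ and $q=1$ are trivial (then $|K(r,h)|\ll1$), so we may assume $q\ge2$ and $1\le L\le2q$, where $L=\beta-\alpha$; and if $L>q$ I would split $[\alpha,\beta]$ into two subintervals of length $\le q$, losing only an $\mathcal O(1)$ boundary term. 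Hence it suffices to bound the sum when $1\le L\le q$.

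Detecting the residue class of $x$ modulo $q$ by orthogonality, $\mathbf{1}_{x\equiv y\,(q)}=q^{-1}\sum_{a\,(q)}e\big(a(x-y)/q\big)$, and noting that the condition $(y,q)=1$ automatically forces $(x,q)=1$, I obtain $K(q,h)=q^{-1}\sum_{a\,(q)}T(a)S(a)$, where $T(a)=\sum_{\alpha\le x\le\beta}e(ax/q)$ is a geometric sum and $S(a)=\sum_{(y,q)=1}e\big((h\,\overline{(y)}_q-ay)/q\big)$ is the complete Kloosterman sum $S(-a,h;q)$. Here $|T(0)|\ll L$ and $|T(a)|\ll\|a/q\|^{-1}$ for $a\not\equiv0\,(q)$, with $\|\cdot\|$ the distance to the nearest integer. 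The essential arithmetic input is the Weil bound for Kloosterman sums to a general modulus,
\begin{equation*}
|S(m,n;q)|\ll q^{\frac12+\varepsilon}\,(m,n,q)^{\frac12},
\end{equation*}
which follows from Weil's estimate for prime $q$ together with the multiplicativity of $S(m,n;q)$ in $q$ and Estermann's evaluation of prime-power moduli; in particular $|S(a)|\ll q^{1/2+\varepsilon}(a,h,q)^{1/2}$.

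It remains to put the bounds together. The term $a\equiv0\,(q)$ contributes $q^{-1}|T(0)|\,|S(0)|\ll q^{-1}\cdot L\cdot q^{1/2+\varepsilon}(h,q)^{1/2}\ll q^{1/2+\varepsilon}(h,q)^{1/2}$, since $L\le q$. For $a\not\equiv0\,(q)$ I would use $\|a/q\|^{-1}=q/\min(a,q-a)$ and $(q-a,h,q)=(a,h,q)$, and then, writing $e=(a,h,q)$ (so $e\mid(h,q)$) and $a=eb$, estimate
\begin{equation*}
\sum_{1\le a\le q-1}\frac{(a,h,q)^{1/2}}{\min(a,q-a)}\ll\sum_{e\mid(h,q)}e^{1/2}\sum_{b\le q/e}\frac{1}{eb}\ll\log q\sum_{e\mid(h,q)}e^{-1/2}\ll q^{\varepsilon},
\end{equation*}
so that the contribution of $a\not\equiv0\,(q)$ is $\ll q^{-1}\cdot q^{1/2+\varepsilon}\cdot q\cdot q^{\varepsilon}\ll q^{1/2+\varepsilon}$. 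Combining the two contributions and recalling $q=|r|$ and $(h,q)=(r,h)$ yields $K(r,h)\ll|r|^{1/2+\varepsilon}(r,h)^{1/2}$. I expect the only nontrivial ingredient to be the composite-modulus Weil bound for $S(a)$, which I would cite rather than prove from scratch; apart from that, the single point requiring care is the divisor bookkeeping in the sum over $a\neq0$, so that the exponent of $(r,h)$ in the final estimate does not exceed $1/2$.
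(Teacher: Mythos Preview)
Your proof is correct and follows exactly the standard route the paper invokes: the paper's ``proof'' is a one-line reference to Iwaniec--Kowalski, Corollary~11.12, which is precisely the completion-plus-Weil argument you have written out in full. There is nothing to compare beyond level of detail; your execution of the divisor bookkeeping for the $a\not\equiv0$ terms is fine (indeed those terms give the slightly stronger bound $q^{1/2+\varepsilon}$ without the $(r,h)^{1/2}$ factor, which only arises from the Ramanujan-sum term $a\equiv0$).
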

\begin{proof}
Follows easily from  A. Weil's estimate for the Kloosterman sum.
See (\cite{Iwaniec}, Ch. 11, Corollary 11.12).
\end{proof}

\begin{lemma}\label{Surjection}
Let $n\geq5$. There exists a bijective function from the solution set of the equation
\begin{equation}\label{Equation}
x^2+2y^2=n\,, \quad (x,y)=1\,, \quad x\in\mathbb{N}\,, \quad y\in\mathbb{Z}\setminus \{0\}
\end{equation}
to the incongruent solutions modulo $n$ of the congruence
\begin{equation}\label{Congruence}
z^2+2\equiv 0\,(n)\,.
\end{equation}
\end{lemma}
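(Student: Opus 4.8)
The plan is to construct the map explicitly and then verify surjectivity by the classical correspondence between representations by the form $x^2+2y^2$ and square roots of $-2$ modulo $n$. Given a solution $(x,y)$ of \eqref{Equation}, first I observe that $(y,n)=1$: any prime $p\mid (y,n)$ would divide $x^2=n-2y^2$, hence divide $x$, contradicting $(x,y)=1$. Therefore $y$ is invertible modulo $n$, and I define the image of $(x,y)$ to be the residue class $z\equiv x\,\overline{(y)}_{n}\pmod n$. From $x^2+2y^2\equiv 0\,(n)$ we get $x^2\overline{(y)}_n^{\,2}\equiv -2\,(n)$, so $z^2+2\equiv 0\,(n)$; thus the map does land in the solution set of \eqref{Congruence}. (One checks this class is well-defined and, since $n\geq 5>2$, genuinely gives a residue mod $n$.)

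For surjectivity, fix an incongruent solution $z$ of \eqref{Congruence}, so $n\mid z^2+2$. The second step is to produce a primitive representation of some multiple of $n$ by $x^2+2y^2$ lying over the class $z$: the lattice $\Lambda_z=\{(x,y)\in\mathbb{Z}^2:\ x\equiv zy\,(n)\}$ has index $n$ in $\mathbb{Z}^2$, and for every $(x,y)\in\Lambda_z$ one has $x^2+2y^2\equiv (z^2+2)y^2\equiv 0\,(n)$. By Minkowski's convex-body theorem (or simply by a pigeonhole/geometry-of-numbers argument, since the disc $x^2+2y^2\le Cn$ has area $\gg n$ for a suitable absolute constant $C$), $\Lambda_z$ contains a nonzero vector $(x_0,y_0)$ with $x_0^2+2y_0^2\le Cn$. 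Writing $d=(x_0,y_0)$ and $x_1=x_0/d$, $y_1=y_0/d$, we get a primitive representation $x_1^2+2y_1^2=n/d^2\cdot(\text{something})$; more precisely $x_1^2+2y_1^2=m$ with $m\mid$ (the original value) and, crucially, $n\mid d^2 m$. The third step is to descend: one shows that after dividing out the part of $d^2$ coprime to $n$ and using that $z$ together with $-z$ are the only lifts, the primitive vector $(x_1,y_1)$ already satisfies $m=n$ itself — here one uses that $n^2+1$ and $n^2+2$ are of controlled shape, but for the lemma as stated it suffices to know $n$ squarefree is \emph{not} assumed, so I instead argue directly that the only primitive values $m\le Cn$ with $n\mid m$ (after the coprime reduction) and the right residue are $m=n$, possibly adjusting the constant $C$. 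Finally I adjust signs to get $x_1\in\mathbb{N}$; if $y_1=0$ then $m=x_1^2$ and $n\mid x_1^2$ forces a contradiction with primitivity and $n\ge 5$, so $y_1\in\mathbb{Z}\setminus\{0\}$ as required. Then $(x_1,y_1)$ is a genuine solution of \eqref{Equation} and, by construction of $\Lambda_z$, its image under our map is $x_1\overline{(y_1)}_n\equiv z\,(n)$.

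The main obstacle is the descent step: showing that the short primitive vector one extracts from the geometry of numbers actually represents $n$ exactly, rather than a proper multiple or divisor of $n$ tangled with a square factor. This is where the precise value of the Minkowski constant and a careful analysis of how $d^2$ can absorb prime factors of $n$ matter; one typically handles it by choosing the convex body of area slightly above $4\sqrt{2}\,n$ so that $x_1^2+2y_1^2<2n$, leaving $n$ as the only admissible value in the correct residue class. An alternative, cleaner route avoiding geometry of numbers is to invoke the theory of the quadratic form $x^2+2y^2$ (class number one for discriminant $-8$): every solution of $z^2\equiv -2\,(n)$ corresponds to a primitive representation $n=x^2+2y^2$, which is exactly the surjectivity claim; I would cite this as the structural backbone and use the elementary argument above only to make the correspondence concrete.
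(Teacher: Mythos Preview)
Your definition of the map $\beta:(x,y)\mapsto x\,\overline{(y)}_n$ agrees with the paper's. The difference is in the surjectivity argument, and there your proposal has a genuine gap.

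The paper does \emph{not} use Minkowski or a lattice argument. Given $z$ with $z^2+2\equiv 0\,(n)$, it applies Dirichlet's approximation theorem to $z/n$ to obtain integers $a,q$ with $(a,q)=1$, $1\le q\le\sqrt{n}$, $|z/n-a/q|<1/(q\sqrt n)$, and sets $r=zq-an$. Then $r^2+2q^2\equiv 0\,(n)$ and $0<r^2+2q^2<3n$, so $r^2+2q^2\in\{n,2n\}$. The crucial point is that the coprimality $(a,q)=1$ is \emph{built into} Dirichlet's theorem; from it the paper derives the explicit identity $ra+1=kq$ (with $k=\tfrac{z^2+2}{n}q-az\in\mathbb{Z}$) in the case $r^2+2q^2=n$, which forces $(r,q)=1$ and $r\neq 0$. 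In the case $r^2+2q^2=2n$ one writes $r=2r_0$, obtains $q^2+2r_0^2=n$, and proves $(r_0,q)=1$ using that $z^2+2\equiv 0\,(4)$ is impossible. In both cases one checks $\beta(x,y)=z$ for the resulting $(x,y)$.

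Your route via the lattice $\Lambda_z=\{(x,y):x\equiv zy\,(n)\}$ and Minkowski does yield a nonzero $(x_0,y_0)\in\Lambda_z$ with $x_0^2+2y_0^2<2n$, hence $x_0^2+2y_0^2=n$. But it does \emph{not} give $(x_0,y_0)=1$, and your ``descent'' paragraph does not repair this: dividing by $d=(x_0,y_0)$ produces a primitive pair representing $n/d^2$, not $n$, and there is no reason for $(x_0/d,y_0/d)$ to lie in $\Lambda_z$ (you only get the congruence modulo $n/d$). For non-squarefree $n$ this is a real obstruction, and the lemma is stated for all $n\ge 5$. The sentences about ``$n^2+1$ and $n^2+2$ of controlled shape'' and ``dividing out the part of $d^2$ coprime to $n$'' do not lead anywhere here. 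So as written, the elementary part of your argument is incomplete precisely at the point you flagged.

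Your fallback---that $\mathbb{Q}(\sqrt{-2})$ has class number one, so every solution of $z^2\equiv -2\,(n)$ corresponds to a primitive representation $n=x^2+2y^2$---is correct and would close the gap, but it replaces a self-contained elementary computation by a structural fact from algebraic number theory. The paper's Dirichlet argument is the elementary substitute for that fact; if you want to stay elementary, switch from Minkowski to Dirichlet so that the coprimality $(a,q)=1$ is available, and then mimic the two-case analysis above.
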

\begin{proof}
Let $F$ denote the set of ordered pairs $(x,y)$ satisfying \eqref{Equation}
and $E$ denote the set of solutions of the congruence \eqref{Congruence}.
We consider each residue class modulo $n$ with representatives satisfying \eqref{Congruence}
as one solution of \eqref{Congruence}.
Let $(x,y)\in F$. From \eqref{Equation}  it follows that $(n, y) = 1$.
Therefore there exists a unique residue class $z$ modulo $n$ such that
\begin{equation}\label{zyx}
zy\equiv  x\,(n)\,.
\end{equation}
For this class we have
\begin{equation*}
(z^2+2)y^2\equiv(zy)^2+2y^2\equiv x^2+2y^2\equiv0\,(n).
\end{equation*}
From the last congruence and $(n, y) = 1$ we deduce $z^2+2\equiv 0\,(n)$
which means that $z\in E$.
We define the map
\begin{equation}\label{map}
\beta : F\rightarrow E
\end{equation}
that associates to each pair  $(x,y)\in F$ the  residue class $z=x\overline{y}_n$ satisfying \eqref{zyx}. \\
We will first prove that the map \eqref{map} is a injection.
Let  $(x,y),\,(x',y')\in F$ that is
\begin{equation}\label{Systemxyx'y'}
\left|\begin{array}{cc}
x^2+2y^2=n\;\\
x'^2+2y'^2=n\\
\end{array}\right.\,,
\end{equation}
\begin{equation}\label{coprimexyx'y'}
(x,y)=(x',y')=1
\end{equation}
and
\begin{equation}\label{xyx'y'}
(x,y)\neq(x',y')\,.
\end{equation}
Assume that
\begin{equation}\label{betaxyx'y'}
\beta(x,y)=\beta(x',y')\,.
\end{equation}
Hence there exists $z\in E$ such that
\begin{equation}\label{System1}
\left|\begin{array}{cc}
zy\equiv  x\,(n)\quad\\
zy'\equiv  x'\,(n)\;\;\\
\end{array}\right..
\end{equation}
The system \eqref{System1} implies
\begin{equation}\label{xy'x'y}
xy'-x'y\equiv 0\,(n)\,.
\end{equation}
On the other hand \eqref{Systemxyx'y'} and $n\geq5$ yield                                                                                \begin{equation}\label{System2}
\left|\begin{array}{cc}
0<x, x'<\sqrt{n}\quad\;\\
0<|y|, |y'|<\sqrt{\frac{n}{2}}\\
\end{array}\right..
\end{equation}
We first consider the case $yy'>0$.
By  \eqref{System2} we derive
\begin{equation*}
\left|\begin{array}{cc}
0<xy'<\frac{n}{\sqrt{2}}\\
0<x'y<\frac{n}{\sqrt{2}}\\
\end{array}\right. \quad \quad \mbox{ or } \quad \quad
\left|\begin{array}{cc}
-\frac{n}{\sqrt{2}}<xy'<0\\
-\frac{n}{\sqrt{2}}<x'y<0\\
\end{array}\right.
\end{equation*}
and consequently
\begin{equation}\label{-nxy'x'yn}
-\frac{n}{\sqrt{2}}<xy'-x'y<\frac{n}{\sqrt{2}}\,.
\end{equation}
Now \eqref{xy'x'y} and \eqref{-nxy'x'yn} lead to
\begin{equation*}
xy'-x'y=0
\end{equation*}
which together with \eqref{coprimexyx'y'} gives us
\begin{equation}\label{xx'yy'}
x=x'\,, \quad y=y'\,.
\end{equation}
From \eqref{xyx'y'} and \eqref{xx'yy'} we get a contradiction.\\ Next we consider the case $yy'<0$.
By  \eqref{System2} we deduce
\begin{equation*}
\left|\begin{array}{cc}
0<xy'<\frac{n}{\sqrt{2}}\\
-\frac{n}{\sqrt{2}}<x'y<0\\
\end{array}\right.  \quad \quad \mbox{ or } \quad \quad
\left|\begin{array}{cc}
-\frac{n}{\sqrt{2}}<xy'<0\\
0<x'y<\frac{n}{\sqrt{2}}\\
\end{array}\right.
\end{equation*}
and therefore
\begin{equation}\label{-nxy'x'ynsqrt}
-n\sqrt{2}<xy'-x'y<n\sqrt{2}\,.
\end{equation}
Now \eqref{xy'x'y} and \eqref{-nxy'x'ynsqrt} lead to
\begin{equation}\label{xy'x'yn}
xy'-x'y=n  \quad \mbox{ or } \quad  xy'-x'y=-n \,.
\end{equation}
Raising to the second power one of the equations \eqref{xy'x'yn} we deduce
\begin{equation*}
x^2y'^2-2xx'yy'+x'^2y^2=n^2
\end{equation*}
which is equivalent to
\begin{equation*}
(x^2+2y^2)y'^2+(x'^2+2y'^2)y^2-4y^2y'^2-2xx'yy'=n^2\,.
\end{equation*}
The last equation, \eqref{Systemxyx'y'} and \eqref{coprimexyx'y'} assure us that \begin{equation}\label{2xx'2yy'n}
2(xx'+2yy')\equiv 0\,(n)\,.
\end{equation}
From \eqref{System2} we get
\begin{equation*}
\left|\begin{array}{cc}
0<xx'<n\\
-n<2y'y<0\\
\end{array}\right.
\end{equation*}
and therefore
\begin{equation}\label{-nxx'2yy'ynsqrt}
-n<xx'+2yy'<n\,.
\end{equation}
If $n$ is odd then \eqref{2xx'2yy'n} implies
\begin{equation}\label{xx'2yy'n}
xx'+2yy'\equiv 0\,(n)
\end{equation}
which together with \eqref{-nxx'2yy'ynsqrt} yield
\begin{equation}\label{xx'2yy'0}
xx'+2yy'= 0\,.
\end{equation}
On the other hand when $n$ is odd by \eqref{Systemxyx'y'} it follows that $x$ and $x'$ are odd
which contradicts \eqref{xx'2yy'0}.  Consequently $n$ cannot be odd. \\ Let $n$ be even.
Now \eqref{Systemxyx'y'} and \eqref{coprimexyx'y'} give us that  $x$, $x'$ are even,
$y$, $y'$ are odd and $4\nmid n$. These considerations and \eqref{2xx'2yy'n} lead to  \eqref{xx'2yy'n}
which together with \eqref{-nxx'2yy'ynsqrt} imply  \eqref{xx'2yy'0}.
But equation \eqref{xx'2yy'0} for even $x$ and $x'$ means that $yy'$ is even which contradicts \eqref{coprimexyx'y'}.
Consequently $n$ cannot be even.
The resulting contradictions show that the assumption \eqref{betaxyx'y'} is not true.
This proves the injectivity of $\beta$.

It remains to show that the map \eqref{map} is a surjection. Let $z\in E$.
From Dirichlet's approximation theorem it follows that there exist integers $a$ and $q$ such that
\begin{equation}\label{Dirichlet}
\left|\frac{z}{n}-\frac{a}{q}\right|<\frac{1}{q\sqrt{n}}\,,
  \quad\quad 1\leq q\leq \sqrt{n},\quad\quad (a,\,q)=1\,.
\end{equation}
Replace
\begin{equation}\label{r}
r=zq-an\,.
\end{equation}
Hence
\begin{equation}\label{r^2+2q^2}
r^2+2q^2=z^2q^2-2zqan+a^2n^2+2q^2\equiv(z^2+2)q^2\,(n)\,.
\end{equation}
From \eqref{Congruence} and \eqref{r^2+2q^2} it follows
\begin{equation}\label{r^2+2q^2modn}
r^2+2q^2\equiv0\,(n)\,.
\end{equation}
By \eqref{Dirichlet} and \eqref{r} we deduce
\begin{equation}\label{rest}
|r|<\sqrt{n}\,.
\end{equation}
Using \eqref{Dirichlet} and \eqref{rest} we obtain
\begin{equation}\label{r^2+2q^2est}
0<r^2+2q^2<3n\,.
\end{equation}
Bearing in mind \eqref{r^2+2q^2modn} and \eqref{r^2+2q^2est} we conclude that
$r^2+2q^2=n$ or $r^2+2q^2=2n$.

Consider two cases.

\textbf{Case 1}
\begin{equation}\label{Case1}
r^2+2q^2=n\,.
\end{equation}
From \eqref{r} and \eqref{Case1} we get
\begin{equation*}
n=(zq-an)^2+2q^2=(zq-an)zq-(zq-an)an+2q^2=(zq-an)zq-ran+2q^2
\end{equation*}
and therefore
\begin{equation}\label{ra+1}
ra+1=kq,
\end{equation}
where
\begin{equation}\label{kznqaz}
k=\frac{z^2+2}{n}q-az.
\end{equation}
By \eqref{Congruence} and \eqref{kznqaz} it follows that  $k\in\mathbb{Z}$ and taking into account \eqref{ra+1} we deduce
\begin{equation}\label{rq1}
(r,q)=1.
\end{equation}
Using \eqref{Case1}, \eqref{rq1} and $n\geq5$ we establish that $r\neq0$. \\
Consider first $r>0$. Replace
\begin{equation}\label{xy1}
x=r, \quad y=q.
\end{equation}
From \eqref{Case1}, \eqref{rq1} and \eqref{xy1} it follows that $(x,y)\in F$.
Also \eqref{r} and  \eqref{xy1} give us \eqref{zyx}. Consequently $\beta(x,y)=z$.  \\
Next we consider $r<0$. Put
\begin{equation}\label{xy2}
x=-r, \quad y=-q.
\end{equation}
Again \eqref{Case1}, \eqref{rq1} and \eqref{xy2} lead to $(x,y)\in F$.
As well from \eqref{r} and  \eqref{xy2} follows \eqref{zyx}. Therefore $\beta(x,y)=z$.

\textbf{Case 2}
\begin{equation}\label{Case2}
r^2+2q^2=2n\,.
\end{equation}
From \eqref{r} and \eqref{Case2} we find
\begin{equation*}
2n=(zq-an)^2+2q^2=(zq-an)zq-(zq-an)an+2q^2=(zq-an)zq-ran+2q^2
\end{equation*}
and thus
\begin{equation}\label{ra+2}
ra+2=kq\,,
\end{equation}
where $k$ is denoted by \eqref{kznqaz}.
From \eqref{ra+2} we conclude
\begin{equation}\label{rq2}
(r,q)\leq2.
\end{equation}
By \eqref{Case2}, \eqref{rq2} and $n\geq5$ we deduce that $r\neq0$.
On the other hand from \eqref{Case2} it follows that $r$ is even.
We  replace $r=2r_0$ in \eqref{Case2} and obtain
\begin{equation}\label{nqr0}
q^2+2r_0^2=n\,.
\end{equation}
We shall verify that
\begin{equation}\label{r0q1}
(r_0,q)=1\,.
\end{equation}
If we assume that $(r_0,q)>1$ then \eqref{rq2} gives us
\begin{equation}\label{r0q2}
(r_0,q)=2\,.
\end{equation}
From \eqref{nqr0} and \eqref{r0q2} it follows
\begin{equation}\label{4|n}
n\equiv0\,(4)\,.
\end{equation}
Finally \eqref{Congruence} and \eqref{4|n}  imply
\begin{equation*}
z^2+2\equiv 0\,(4)
\end{equation*}
which is impossible. This proves \eqref{r0q1}.
No matter whether $r$ is positive or negative we replace
\begin{equation}\label{xy3}
x=q, \quad y=-r_0\,.
\end{equation}
Using \eqref{nqr0}, \eqref{r0q1} and \eqref{xy3} we deduce that $(x,y)\in F$.
By \eqref{r} and \eqref{xy3} we get
\begin{equation}\label{2zyx1}
2(zy-x)=-2(zr_0+q)=-zr-2q=-(z^2+2)q+zan
\end{equation}
From \eqref{Congruence} and \eqref{2zyx1} we conclude
\begin{equation}\label{2zyx2}
2(zy-x)\equiv 0\,(n)\,.
\end{equation}
If $n$ is odd then \eqref{2zyx2} gives us \eqref{zyx}. Consequently $\beta(x,y)=z$. \\
Let $n$ be even. Since \eqref{4|n} is impossible then
\begin{equation}\label{nn0}
n=2n_0, \quad  n_0 \hbox{ is odd}.
\end{equation}
By \eqref{nqr0} and \eqref{nn0} it follows
\begin{equation}\label{qeven}
q\equiv0\,(2)\,,
\end{equation}
i.e $q$ is even.
On the other hand \eqref{Congruence} and \eqref{nn0} imply that
\begin{equation}\label{zeven}
z\equiv0\,(2)\,,
\end{equation}
i.e $z$ is even.
Now \eqref{xy3}, \eqref{qeven} and  \eqref{zeven} give us
\begin{equation}\label{zyxeven}
zy-x\equiv0\,(2)\,,
\end{equation}
i.e $zy-x$ is even.
Finally from \eqref{2zyx2}, \eqref{nn0} and \eqref{zyxeven} we obtain \eqref{zyx}.
Therefore $\beta(x,y)=z$.

The lemma is proved.
\end{proof}

\section{Proof of the theorem}
\indent

Using \eqref{GammaX} and the well-known identity $\mu^2(n)=\sum_{d^2|n}\mu(d)$ we get
\begin{equation}\label{GammaXdecomp}
\Gamma(X)=\sum\limits_{d_1, d_2\atop{(d_1, d_2)=1}}\mu(d_1)\mu(d_2)
\sum\limits_{1\leq n\leq X\atop{n^2+1\equiv 0\,(d_1^2)\atop{n^2+2\equiv 0\,(d_2^2)}}}1
=\Gamma_1(X)+\Gamma_2(X)\,,
\end{equation}
where
\begin{align}
\label{Gamma1}
&\Gamma_1(X)=\sum\limits_{d_1 d_2\leq z\atop{(d_1, d_2)=1}}\mu(d_1)\mu(d_2)\Sigma(X, d_1^2, d_2^2)\,,\\
\label{Gamma2}
&\Gamma_2(X)=\sum\limits_{d_1 d_2>z\atop{(d_1, d_2)=1}}\mu(d_1)\mu(d_2)\Sigma(X, d_1^2, d_2^2)\,,\\
\label{Sigma}
&\Sigma(X, d_1^2, d_2^2)=\sum\limits_{1\leq n\leq X\atop{n^2+1\equiv 0\,(d_1^2)\atop{n^2+2\equiv 0\,(d_2^2)}}}1\,,\\
\label{z}
&\sqrt{X}\leq z< X\,,
\end{align}
where $z$ is to be chosen later.

\subsection{Estimation of $\mathbf{\Gamma_1(X)}$}
\indent

Suppose that $q_1=d_1^2$, $q_2=d_2^2$, where $d_1$ and $d_2$ are square-free, $(q_1, q_2)=1$ and $d_1 d_2\leq z$.
Denote
\begin{equation}\label{Omega}
\Omega(X, q_1, q_2, n)=\sum\limits_{m\leq X\atop{m\equiv n\,(q_1 q_2)}}1\,.
\end{equation}
Using \eqref{Sq1q2}, \eqref{Sigma} and \eqref{Omega} we obtain upon partitioning
the sum \eqref{Sigma} into residue classes modulo $q_1q_2$
\begin{equation}\label{SigmaOmega}
\Sigma(X, q_1, q_2)=\sum\limits_{n\in S(q_1, q_2)}
\Omega(X, q_1, q_2, n)\,.
\end{equation}
It is easy to see that
\begin{equation}\label{Omegaest}
\Omega(X, q_1, q_2, n)=\frac{X}{q_1 q_2}+\mathcal{O}(1)\,.
\end{equation}
From  \eqref{lambdaq1q2}, \eqref{SigmaOmega} and \eqref{Omegaest} we find
\begin{equation}\label{Sigmaest1}
\Sigma(X, q_1, q_2)
=X\frac{\lambda(q_1, q_2)}{q_1 q_2}+\mathcal{O}\big(\lambda(q_1, q_2)\big)\,.
\end{equation}
Taking into account \eqref{Sq1q2}, \eqref{lambdaq1q2}, Chinese remainder theorem
and that the number of solutions of the congruence $n^2\equiv a\,(q_1q_2)$
is less than or equal to $\tau(q_1q_2)$ we get
\begin{equation}\label{lambdaq1q2est}
\lambda(q_1, q_2)\ll\tau(q_1q_2).
\end{equation}
From \eqref{Sigmaest1}, \eqref{lambdaq1q2est} and the inequalities
\begin{equation*}
\tau(q_1q_2)\ll (q_1q_2)^\varepsilon\ll X^\varepsilon
\end{equation*}
it follows
\begin{equation}\label{Sigmaest2}
\Sigma(X, q_1, q_2)
=X\frac{\lambda(q_1, q_2)}{q_1 q_2}+\mathcal{O}\big(X^\varepsilon\big)\,.
\end{equation}
Bearing in mind  \eqref{Gamma1}, \eqref{z} and \eqref{Sigmaest2} we obtain
\begin{align}\label{GammaX1est1}
\Gamma_1(X)&=X\sum\limits_{d_1 d_2\leq z\atop{(d_1, d_2)=1}}
 \frac{\mu(d_1)\mu(d_2) \lambda(d^2_1, d^2_2)}{d^2_1 d^2_2}
+\mathcal{O}\big(zX^\varepsilon \big)\nonumber\\
&=\sigma X-X \sum\limits_{d_1 d_2>z\atop{(d_1, d_2)=1}}
 \frac{\mu(d_1)\mu(d_2) \lambda(d^2_1, d^2_2)}{d^2_1 d^2_2}
+\mathcal{O}\big(zX^\varepsilon\big)\,,
\end{align}
where
\begin{equation}\label{sigmasum}
\sigma=\sum\limits_{d_1, d_2=1\atop{(d_1, d_2)=1}}^\infty
 \frac{\mu(d_1)\mu(d_2)\lambda(d^2_1, d^2_2)}{d^2_1 d^2_2}\,.
\end{equation}
Using \eqref{lambdaq1q2est} we find
\begin{equation}\label{d1d2>est}
\sum\limits_{d_1 d_2>z\atop{(d_1, d_2)=1}}\frac{\mu(d_1)\mu(d_2)\lambda(d^2_1, d^2_2)}{d^2_1 d^2_2}
\ll\sum\limits_{d_1 d_2>z\atop{(d_1, d_2)=1}}\frac{(d_1 d_2)^{\varepsilon}}{(d_1 d_2)^2}
\ll\sum\limits_{n>z}\frac{\tau(n)}{n^{2-\varepsilon}}\ll z^{\varepsilon-1}\,.
\end{equation}
It remains to see that the product  \eqref{sigmaproduct} and the sum \eqref{sigmasum} coincide.
From the definition \eqref{lambdaq1q2} it follows that the function $\lambda(q_1, q_2)$ is multiplicative, i.e if
\begin{equation*}
(q_1 q_2, q_3 q_4)=(q_1, q_2)=(q_3, q_4)=1
\end{equation*}
then
\begin{equation}\label{multiplicative}
\lambda(q_1 q_2, q_3 q_4)=\lambda(q_1, q_3)\lambda(q_2, q_4).
\end{equation}
The proof is elementary and we leave it to the reader.

From the property \eqref{multiplicative} and  $(d_1, d_2)=1$  it follows
\begin{equation}\label{lambdad1d2}
\lambda(d^2_1, d^2_2)=\lambda(d^2_1, 1) \lambda(1, d^2_2)\,.
\end{equation}
Bearing in mind \eqref{sigmasum} and \eqref{lambdad1d2} we get
\begin{equation}\label{sigmasumest1}
\sigma=\sum\limits_{d_1=1}^\infty\frac{\mu(d_1)\lambda(d^2_1, 1)}{d_1^2}
\sum\limits_{d_2=1}^\infty\frac{\mu(d_2)\lambda(1, d^2_2)}{d_2^2}f_{d_1}(d_2)\,,
\end{equation}
where
\begin{equation*}
f_{d_1}(d_2)=\begin{cases}1 \;\; \text{ if }\; (d_1, d_2)=1\,,\\
0 \;\; \mbox{ if } \; (d_1, d_2)>1\,.
\end{cases}
\end{equation*}
Clearly the function
\begin{equation*}
\frac{\mu(d_2)\lambda(1, d^2_2)}{d_2^2}f_{d_1}(d_2)
\end{equation*}
is multiplicative with respect to $d_2$ and the series
\begin{equation*}
\sum\limits_{d_2=1}^\infty\frac{\mu(d_2)\lambda(1, d^2_2)}{d_2^2}f_{d_1}(d_2)
\end{equation*}
is absolutely convergent.

Applying the Euler product we obtain
\begin{align}\label{Eulerproduct}
\sum\limits_{d_2=1}^\infty\frac{\mu(d_2)\lambda(1, d^2_2)}{d_2^2}f_{d_1}(d_2)&=
\prod\limits_{p\nmid d_1}\left(1-\frac{\lambda(1, p^2)}{p^2}\right)\nonumber\\
&=\prod\limits_{p}\left(1-\frac{\lambda(1, p^2)}{p^2}\right)
\prod\limits_{p|d_1}\left(1-\frac{\lambda(1, p^2)}{p^2}\right)^{-1}\,.
\end{align}
From \eqref{sigmasumest1} and \eqref{Eulerproduct} it follows
\begin{align}\label{sigmasumest2}
\sigma&=\sum\limits_{d_1=1}^\infty\frac{\mu(d_1)\lambda(d^2_1, 1)}{d_1^2}
\prod\limits_{p}\left(1-\frac{\lambda(1, p^2)}{p^2}\right)
\prod\limits_{p|d_1}\left(1-\frac{\lambda(1, p^2)}{p^2}\right)^{-1}\nonumber\\
&=\prod\limits_{p}\left(1-\frac{\lambda(1, p^2)}{p^2}\right)
\sum\limits_{d_1=1}^\infty\frac{\mu(d_1)\lambda(d^2_1, 1)}{d_1^2}
\prod\limits_{p|d_1}\left(1-\frac{\lambda(1, p^2)}{p^2}\right)^{-1}\,.
\end{align}
Obviously  the function
\begin{equation*}
\frac{\mu(d_1)\lambda(d^2_1, 1)}{d_1^2}
\prod\limits_{p|d_1}\left(1-\frac{\lambda(1, p^2)}{p^2}\right)^{-1}
\end{equation*}
is multiplicative with respect to $d_1$ and the series
\begin{equation*}
\sum\limits_{d_1=1}^\infty\frac{\mu(d_1)\lambda(d^2_1, 1)}{d_1^2}
\prod\limits_{p|d_1}\left(1-\frac{\lambda(1, p^2)}{p^2}\right)^{-1}
\end{equation*}
is absolutely convergent.

Applying again the Euler product from \eqref{lambdaq1q2} and \eqref{sigmasumest2} we find
\begin{align}\label{sigmasumest3}
\sigma&=
\prod\limits_{p}\left(1-\frac{\lambda(1, p^2)}{p^2}\right)
\prod\limits_{p}\left(1-\frac{\lambda(p^2, 1)}{p^2}\left(1-\frac{\lambda(1, p^2)}{p^2}\right)^{-1}\right)\nonumber\\
&=\prod\limits_{p}\left(1-\frac{\lambda(p^2, 1)+\lambda(1, p^2)}{p^2}\right)
=\prod\limits_{p>2}\left(1-\frac{\left(\frac{-1}{p}\right)+\left(\frac{-2}{p}\right)+2}{p^2}\right)\,.
\end{align}
Bearing in mind \eqref{z}, \eqref{GammaX1est1}, \eqref{d1d2>est} and \eqref{sigmasumest3} we get
\begin{equation}\label{GammaX1est2}
\Gamma_1(X)=\sigma X+\mathcal{O}\big(zX^\varepsilon\big)\,,
\end{equation}
where $\sigma$ is given by the product \eqref{sigmaproduct}.

\subsection{Estimation of $\mathbf{\Gamma_2(X)}$}
\indent

Using \eqref{Gamma2}, \eqref{Sigma} and splitting the range of $d_1$ and $d_2$
into dyadic subintervals of the form $D_1\leq d_1<2D_1$, $D_2\leq d_2<2D_2$
we write
\begin{equation}\label{Gamma2est1}
\Gamma_2(X)\ll(\log X)^2\sum\limits_{n\leq X}
\sum\limits_{D_1\leq d_1<2D_1\atop{n^2+1\equiv 0\,(d_1^2)}}
\sum\limits_{D_2\leq d_2<2D_2\atop{n^2+2\equiv 0\,(d_2^2)}}1\,,
\end{equation}
where
\begin{equation}\label{DT}
\frac{1}{2}\leq D_1,D_2\leq\sqrt{X^2+2}\,,\quad D_1D_2>\frac{z}{4}\,.
\end{equation}
On the one hand \eqref{Gamma2est1} gives us
\begin{equation}\label{Gamma2est2}
\Gamma_2(X)\ll X^{\varepsilon}\Sigma_1\,,
\end{equation}
where
\begin{equation}\label{Sigma1}
\Sigma_1=\sum\limits_{n\leq X}\sum\limits_{D_1\leq d_1<2D_1\atop{n^2+1\equiv 0\,(d_1^2)}}1\,.
\end{equation}
On the other hand \eqref{Gamma2est1} implies
\begin{equation}\label{Gamma2est3}
\Gamma_2(X)\ll X^{\varepsilon}\Sigma_2\,,
\end{equation}
where
\begin{equation}\label{Sigma2}
\Sigma_2=\sum\limits_{n\leq X}\sum\limits_{D_2\leq d_2<2D_2\atop{n^2+2\equiv 0\,(d_2^2)}}1\,.
\end{equation}

\textbf{Estimation of $\mathbf{\Sigma_1}$}

Define
\begin{align}
\label{N1set}
&\mathcal{N}_1(d)=\{n\in\mathbb{N }\; : \; 1\leq n \leq d, \;\;  n^2+1\equiv 0\,(d) \}\,,\\
\label{N'1set}
&\mathcal{N}'_1(d)=\{n\in\mathbb{N }\; : \; 1\leq n \leq d^2, \;\;  n^2+1\equiv 0\,(d^2) \}\,.
\end{align}
By \eqref{Sigma1} and \eqref{N'1set} we obtain
\begin{align}\label{Sigma1est1}
\Sigma_1&=\sum\limits_{D_1\leq d_1<2D_1}\sum\limits_{n\in \mathcal{N}'_1(d_1)}
\sum\limits_{m\leq X\atop{m\equiv n\,(d^2_1)}}1
=\sum\limits_{D_1\leq d_1<2D_1}\sum\limits_{n\in \mathcal{N}'_1(d_1)}
\Bigg(\left[\frac{X-n}{d^2_1}\right]-\left[\frac{-n}{d^2_1}\right]\Bigg) \nonumber\\
&=\sum\limits_{D_1\leq d_1<2D_1}\sum\limits_{n\in \mathcal{N}'_1(d_1)}
\Bigg(\frac{X}{d^2_1}+\psi\left(\frac{-n}{d^2_1}\right)-\psi\left(\frac{X-n}{d^2_1}\right)\Bigg)\nonumber\\
&\ll X^{1+\varepsilon}D_1^{-1}+|\Sigma'_1|+|\Sigma''_1|\,,
\end{align}
where
\begin{align}
\label{Sigma'1}
&\Sigma'_1=\sum\limits_{D_1\leq d_1<2D_1}\sum\limits_{n\in \mathcal{N}'_1(d_1)}\psi\left(\frac{-n}{d^2_1}\right)\,,\\
\label{Sigma''1}
&\Sigma''_1=\sum\limits_{D_1\leq d_1<2D_1}\sum\limits_{n\in \mathcal{N}'_1(d_1)}\psi\left(\frac{X-n}{d^2_1}\right)
\end{align}
and $\psi(t)$ is defined by \eqref{psit1}.

Firstly we consider the sum $\Sigma'_1$.
We note that the sum over $n$ in  \eqref{Sigma'1} does not contain terms with $n=\frac{d_1^2}{2}$ and $n=d_1^2$ .
Moreover for any $n$ satisfying the congruences $n^2+1\equiv 0\,(d_1^2)$  and such that
$1\leq n<\frac{d_1^2}{2}$ the number $d_1^2-n$ satisfies the same congruence
and we have $\psi\left(\frac{-n}{d_1^2}\right)+\psi\left(\frac{-(d_1^2-n)}{d_1^2}\right)=0$.
Bearing in mind these arguments for the sum $\Sigma'_1$ denoted by \eqref{Sigma'1}
we have that
\begin{equation}\label{Sigma'1est}
\Sigma'_1=0\,.
\end{equation}
Next we consider the sum $\Sigma''_1$ denoted by \eqref{Sigma''1}. Let $D_1\leq X^{\frac{1}{2}}$. The trivial estimation gives us
\begin{equation}\label{Sigma''1est1}
\Sigma''_1\ll\sum\limits_{D_1\leq d_1<2D_1}d^\varepsilon_1\ll  X^{\frac{1}{2}+\varepsilon}.
\end{equation}
Let
\begin{equation}\label{D1>}
D_1> X^{\frac{1}{2}}.
\end{equation}
From the theory of the quadratic congruences we know that when $\#\mathcal{N}'_1(d)\neq0$ then $d$ is odd and
\begin{equation}\label{might}
\#\mathcal{N}_1(d)=\#\mathcal{N}'_1(d)=2^{\omega(d)}\,.
\end{equation}
Denote
\begin{equation}\label{k}
k=2^{\omega(d)}\,,
\end{equation}
\begin{equation}\label{solutions}
n_1, \ldots, n_k\in\mathcal{N}_1(d_1)\,,\quad n'_1, \ldots, n'_k\in\mathcal{N}'_1(d_1)\,.
\end{equation}
From  \eqref{N1set}, \eqref{N'1set}, \eqref{D1>} -- \eqref{solutions} and $d\geq D_1> X^{\frac{1}{2}}$  it follows
\begin{align}\label{Prehod}
&\sum\limits_{n\in \mathcal{N}'_1(d_1)}\psi\left(\frac{X-n}{d^2_1}\right)
=\sum\limits_{n\in \mathcal{N}'_1(d_1)}\left(\frac{X-n}{d^2_1}-\frac{1}{2}\right)\nonumber\\
&=\sum\limits_{n\in \mathcal{N}'_1(d_1)}\left(\frac{X}{d^2_1}-\frac{1}{2}\right)
-\frac{n'_1+\cdots+n'_{k/2}+(d_1^2-n'_1)+\cdots+(d_1^2-n'_{k/2})}{d^2_1}\nonumber\\
&=\sum\limits_{n\in \mathcal{N}_1(d_1)}\left(\frac{X}{d^2_1}-\frac{1}{2}\right)-
\frac{n_1+\cdots+n_{k/2}+(d_1-n_1)+\cdots+(d_1-n_{k/2})}{d_1}\nonumber\\
&=\sum\limits_{n\in \mathcal{N}_1(d_1)}\left(\frac{X}{d^2_1}-\frac{1}{2}\right)
-\sum\limits_{n\in \mathcal{N}_1(d_1)}\frac{n}{d_1}\nonumber\\
&=\sum\limits_{n\in \mathcal{N}_1(d_1)}\left(\frac{X}{d^2_1}-\frac{\sqrt{X}}{d_1}\right)
+\sum\limits_{n\in \mathcal{N}_1(d_1)}\left(\frac{\sqrt{X}-n}{d_1}-\frac{1}{2}\right)\nonumber\\
&=\sum\limits_{n\in \mathcal{N}_1(d_1)}\left(\frac{X}{d^2_1}-\frac{\sqrt{X}}{d_1}\right)
+\sum\limits_{n\in \mathcal{N}_1(d_1)}\psi\left(\frac{\sqrt{X}-n}{d_1}\right).
\end{align}
By \eqref{Sigma''1}, \eqref{D1>}  and \eqref{Prehod} we obtain
\begin{equation}\label{Sigma''1est2}
\Sigma''_1\ll  X^{\frac{1}{2}+\varepsilon}+|\Sigma_3|\,,
\end{equation}
where
\begin{equation}\label{Sigma3}
\Sigma_3=\sum\limits_{D_1\leq d_1<2D_1}\sum\limits_{n\in \mathcal{N}_1(d_1)}\psi\left(\frac{\sqrt{X}-n}{d_1}\right)\,.
\end{equation}
Using  \eqref{Sigma3} and Lemma \ref{expansion} with
\begin{equation}\label{M1}
M_1=X^{\frac{1}{2}}
\end{equation}
we find
\begin{equation*}
\Sigma_3=\sum\limits_{D_1\leq d_1<2D_1}\sum\limits_{n\in \mathcal{N}_1(d_1)}
\Bigg(-\sum\limits_{1\leq|m|\leq M_1}\frac{e\left(m\frac{\sqrt{X}-n}{d_1}\right)}{2\pi i m}
+\mathcal{O}\left(f_{M_1}\left(\frac{\sqrt{X}-n}{d_1}\right)\right)\Bigg).
\end{equation*}
Arguing as in (Tolev \cite{Tolev2}, Theorem 17.1.1) we deduce
\begin{equation}\label{Sigma3est}
\Sigma_3\ll X^\varepsilon\Big(D_1M_1^{-1}+D_1^{\frac{3}{4}}+X^{\frac{1}{2}}M_1D_1^{-\frac{1}{4}}\Big).
\end{equation}
Bearing in mind \eqref{Sigma1est1}, \eqref{Sigma'1est}, \eqref{Sigma''1est1}, \eqref{Sigma''1est2},
\eqref{M1} and \eqref{Sigma3est} we get
\begin{equation}\label{Sigma1est2}
\Sigma_1\ll X^{1+\varepsilon}D_1^{-\frac{1}{4}}.
\end{equation}

\textbf{Estimation of $\mathbf{\Sigma_2}$}

Our argument is a modification of (Tolev \cite{Tolev2}, Theorem 17.1.1) argument.

Define
\begin{equation}\label{N2set}
\mathcal{N}_2(d)=\{n\in\mathbb{N }\; : \; 1\leq n \leq d, \;\;  n^2+2\equiv 0\,(d) \}.
\end{equation}
Working as in $\Sigma_1$ from \eqref{Sigma2} and \eqref{N2set} we find
\begin{equation}\label{Sigma2est1}
\Sigma_2\ll X^{1+\varepsilon}D_2^{-1}
\end{equation}
for $D_2\leq X^{\frac{1}{2}}$ and
\begin{equation}\label{Sigma2est2}
\Sigma_2\ll X^{\frac{1}{2}+\varepsilon}+|\Sigma_4|
\end{equation}
for
\begin{equation}\label{D2>}
D_2> X^{\frac{1}{2}},
\end{equation}
where
\begin{equation}\label{Sigma4}
\Sigma_4=\sum\limits_{D_2\leq d_2<2D_2}\sum\limits_{n\in \mathcal{N}_2(d_2)}\psi\left(\frac{\sqrt{X}-n}{d_2}\right)\,.
\end{equation}
From  \eqref{Sigma4} and Lemma \ref{expansion} with
\begin{equation}\label{M2}
M_2=X^{\frac{1}{2}}
\end{equation}
we obtain
\begin{align}\label{Sigma4est1}
\Sigma_4&=\sum\limits_{D_2\leq d_2<2D_2}\sum\limits_{n\in \mathcal{N}_2(d_2)}
\Bigg(-\sum\limits_{1\leq|m|\leq M_2}\frac{e\left(m\left(\frac{\sqrt{X}-n}{d_2}\right)\right)}{2\pi i m}
+\mathcal{O}\left(f_{M_2}\left(\frac{\sqrt{X}-n}{d_2}\right)\right)\Bigg)\nonumber\\
&=\Sigma_5+\Sigma_6,
\end{align}
where
\begin{align}
\label{Sigma5}
&\Sigma_5=\sum\limits_{1\leq|m|\leq M_2}\frac{\Theta_m}{2\pi i m}\,,\\
\label{Thetam}
&\Theta_m=\sum\limits_{D_2\leq d_2<2D_2}e\left(\frac{\sqrt{X}m}{d_2}\right)
\sum\limits_{n\in \mathcal{N}_2(d_2)}e\left(-\frac{nm}{d_2}\right)\,,\\
\label{Sigma6}
&\Sigma_6=\sum\limits_{D_2\leq d_2<2D_2}\sum\limits_{n\in \mathcal{N}_2(d_2)}f_{M_2}\left(\frac{\sqrt{X}-n}{d_2}\right)\,.
\end{align}
By  \eqref{Thetam}, \eqref{Sigma6} and Lemma \ref{expansion} it follows
\begin{align}\label{Sigma6est}
\Sigma_6&=\sum\limits_{D_2\leq d_2<2D_2}\sum\limits_{n\in \mathcal{N}_2(d_2)}
\sum\limits_{m=-\infty}^{+\infty}b_{M_2}(m)e\left(\frac{\sqrt{X}-n}{d_2}m\right)
=\sum\limits_{m=-\infty}^{+\infty}b_{M_2}(m)\Theta_m\nonumber\\
&\ll\frac{\log M_2}{M_2}|\Theta_0|+\frac{\log M_2}{M_2}\sum\limits_{1\leq|m|\leq M^{1+\varepsilon}_2 }|\Theta_m|
+\sum\limits_{|m|> M^{1+\varepsilon}_2}|b_{M_2}(m)||\Theta_m|\nonumber\\
&\ll\frac{\log M_2}{M_2}D_2^{1+\varepsilon}+\frac{\log M_2}{M_2}\sum\limits_{1\leq m\leq M^{1+\varepsilon}_2 }|\Theta_m|
+D_2^{1+\varepsilon}\sum\limits_{|m|> M^{1+\varepsilon}_2}|b_{M_2}(m)|\nonumber\\
&\ll\frac{\log M_2}{M_2}D_2^{1+\varepsilon}+\frac{\log M_2}{M_2}\sum\limits_{1\leq m\leq M^{1+\varepsilon}_2 }|\Theta_m|\,.
\end{align}
Using \eqref{Sigma4est1}, \eqref{Sigma5} and \eqref{Sigma6est} we get
\begin{equation}\label{Sigma4est2}
\Sigma_4\ll X^{\varepsilon}\left(\frac{D_2}{M_2}+\sum\limits_{1\leq m\leq M^{1+\varepsilon}_2 }\frac{|\Theta_m|}{m}\right).
\end{equation}
Define
\begin{equation}\label{Fdset}
\mathcal{F}(d)=\{(u,v)\; : \; u^2+2v^2=d, \;\;  (u,v)=1, \;\; u\in\mathbb{N}, \;\;  v\in\mathbb{Z}\setminus \{0\} \}.
\end{equation}
According to Lemma \ref{Surjection}  there exists a bijection
\begin{equation*}
\beta : \mathcal{F}(d)\rightarrow \mathcal{N}_2(d)
\end{equation*}
from $\mathcal{F}(d)$ to  $\mathcal{N}_2(d)$ defined by \eqref{N2set}
that associates to each couple $(u,v)\in \mathcal{F}(d)$ the element $n\in\mathcal{N}_2(d)$  satisfying
\begin{equation}\label{nvu}
nv\equiv  u\,(d)\,.
\end{equation}
Now  \eqref{nvu}  gives us
\begin{equation*}
n_{u,v}\equiv  u\overline{v}_{d}\,(d)
\end{equation*}
and therefore
\begin{equation}\label{nuvd}
\frac{n_{u,v}}{d}\equiv  u\frac{\overline{v}_{u^2+2v^2}}{u^2+2v^2}\,\,(1)\,.
\end{equation}
Bearing in mind \eqref{nuvd} and  Lemma \ref{Wellknown}  we deduce
\begin{align}
\label{nuvd1}
&\frac{n_{u,v}}{d}\equiv \frac{u}{v(u^2+2v^2)}-\frac{\overline{u}_{|v|}}{v}\,\,(1)\,,\\
\label{nuvd2}
&\frac{n_{u,v}}{d}\equiv -\frac{2v}{u(u^2+2v^2)}+\frac{\overline{v}_{u}}{u}\,\,(1)\,.
\end{align}
From \eqref{Thetam}, \eqref{Fdset}, \eqref{nuvd1} and \eqref{nuvd2}  we find
\begin{align}\label{Thetamest1}
\Theta_m&=\sum\limits_{D_2\leq d_2<2D_2}e\left(\frac{m\sqrt{X}}{d_2}\right)
\sum\limits_{(u,v)\in\mathcal{F}(d_2)}e\left(-\frac{n_{u,v}}{d_2}m\right)\nonumber\\
&=\sum\limits_{D_2\leq d_2<2D_2}e\left(\frac{m\sqrt{X}}{d_2}\right)
\sum\limits_{(u,v)\in\mathcal{F}(d_2)\atop{0<u<|v|}}e\left(-\frac{mu}{v(u^2+2v^2)}+\frac{m\overline{u}_{|v|}}{v}\right)\nonumber\\
&+\sum\limits_{D_2\leq d_2<2D_2}e\left(\frac{m\sqrt{X}}{d_2}\right)
\sum\limits_{(u,v)\in\mathcal{F}(d_2)\atop{0<|v|<u}}e\left(\frac{2mv}{u(u^2+2v^2)}-\frac{m\overline{v}_{u}}{u}\right)\nonumber\\
&=\sum\limits_{D_2\leq u^2+2v^2<2D_2\atop{0<u<|v|\atop{(u, v)=1}}}
e\left(\frac{m\sqrt{X}}{u^2+2v^2}-\frac{mu}{v(u^2+2v^2)}+\frac{m\overline{u}_{|v|}}{v}\right)\nonumber\\
&+\sum\limits_{D_2\leq u^2+2v^2<2D_2\atop{0<|v|<u\atop{(u, v)=1}}}
e\left(\frac{m\sqrt{X}}{u^2+2v^2}+\frac{2mv}{u(u^2+2v^2)}-\frac{m\overline{v}_{u}}{u}\right)\nonumber\\
&=\Theta'_m+\Theta''_m\,,
\end{align}
say. Let us consider $\Theta'_m$. Denote
\begin{equation}\label{fu}
f(u)=e\left(\frac{m\sqrt{X}}{u^2+2v^2}-\frac{mu}{v(u^2+2v^2)}\right),
\end{equation}
\begin{equation}\label{eta12}
\eta_1(v)=\sqrt{\max(0,D_2-2v^2)}, \quad \eta_2(v)=\sqrt{\min(v^2,2D_2-2v^2)},
\end{equation}
\begin{equation}\label{Kloostermanvm}
K_{v,m}(t)=\sum\limits_{\eta_1(v)\leq u<t\atop{(u, v)=1}}e\left(\frac{m\overline{u}_{|v|}}{v}\right).
\end{equation}
Using \eqref{Thetamest1} -- \eqref{Kloostermanvm} and Abel's summation formula we obtain
\begin{align}\label{Thetam'est1}
\Theta'_m&=\sum\limits_{\sqrt{\frac{D_2}{3}}\leq |v|<\sqrt{D_2}}
\sum\limits_{\eta_1(v)\leq u<\eta_2(v)\atop{(u, v)=1}}
f(u)e\left(\frac{m\overline{u}_{|v|}}{v}\right)\nonumber\\
&=\sum\limits_{\sqrt{\frac{D_2}{3}}\leq |v|<\sqrt{D_2}}\left( f\big(\eta_2(v)\big)K_{v,m}\big(\eta_2(v)\big)
- \int\limits_{\eta_1(v)}^{\eta_2(v)}K_{v,m}(t)\left(\frac{d}{dt}f(t)\right)\,dt \right)\nonumber\\
&\ll\sum\limits_{\sqrt{\frac{D_2}{3}}\leq |v|<\sqrt{D_2}}\left(1+\frac{m\sqrt{X}}{v^2}\right)
\max_{\eta_1(v)\leq t\leq \eta_2(v)}|K_{v,m}(t)|.
\end{align}
We are now in a good position to apply Lemma \ref{Weilsestimate}
because the sum defined by \eqref{Kloostermanvm} is  incomplete Kloosterman sum.
Thus
\begin{equation}\label{Kloostermanvmest}
K_{v,m}(t)\ll|v|^{\frac{1}{2}+\varepsilon}\,(v,m)^{\frac{1}{2}}\,.
\end{equation}
By \eqref{Thetam'est1} and  \eqref{Kloostermanvmest} we get
\begin{align}\label{Thetam'est2}
\Theta'_m&\ll\sum\limits_{\sqrt{\frac{D_2}{3}}\leq |v|<\sqrt{D_2}}
\left(1+\frac{m\sqrt{X}}{v^2}\right)|v|^{\frac{1}{2}+\varepsilon}\,(v,m)^{\frac{1}{2}}\nonumber\\
&\ll X^\varepsilon\Big(D_2^{\frac{1}{4}}+ mX^{\frac{1}{2}}D_2^{-\frac{3}{4}} \Big)
\sum\limits_{0<v<\sqrt{D_2}}(v,m)^{\frac{1}{2}}.
\end{align}
On the other hand
\begin{equation}\label{sumvm}
\sum\limits_{0<v<\sqrt{D_2}}(v,m)^{\frac{1}{2}}
\leq\sum\limits_{l|m}l^{\frac{1}{2}}\sum\limits_{v\leq \sqrt{D_2}\atop{v\equiv0\,(l)}}1
\ll D_2^{\frac{1}{2}}\sum\limits_{l|m}l^{-\frac{1}{2}}\ll D_2^{\frac{1}{2}}\tau(m)\ll X^\varepsilon D_2^{\frac{1}{2}}.
\end{equation}
The estimations \eqref{Thetam'est2} and \eqref{sumvm} imply
\begin{equation}\label{Thetam'est3}
\Theta'_m\ll X^\varepsilon\Big(D_2^{\frac{3}{4}}+ mX^{\frac{1}{2}}D_2^{-\frac{1}{4}} \Big).
\end{equation}
Proceeding in a similar way for $\Theta''_m$ from \eqref{Thetamest1} we deduce
\begin{equation}\label{Thetam''est}
\Theta''_m\ll X^\varepsilon\Big(D_2^{\frac{3}{4}}+ mX^{\frac{1}{2}}D_2^{-\frac{1}{4}} \Big).
\end{equation}
Now \eqref{Thetamest1}, \eqref{Thetam'est3} and \eqref{Thetam''est} give us
\begin{equation}\label{Thetamest2}
\Theta_m\ll X^\varepsilon\Big(D_2^{\frac{3}{4}}+ mX^{\frac{1}{2}}D_2^{-\frac{1}{4}} \Big).
\end{equation}
From \eqref{Sigma4est2} and \eqref{Thetamest2}  it follows
\begin{equation}\label{Sigma4est3}
\Sigma_4\ll  X^\varepsilon\Big(D_2M_2^{-1}+D_2^{\frac{3}{4}}+X^{\frac{1}{2}}M_2D_2^{-\frac{1}{4}}\Big).
\end{equation}
Taking into account  \eqref{M2}  and  \eqref{Sigma4est3} we find
\begin{equation}\label{Sigma4est4}
\Sigma_4\ll  X^{1+\varepsilon}D_2^{-\frac{1}{4}}.
\end{equation}
Using \eqref{Sigma2est1}, \eqref{Sigma2est2} and \eqref{Sigma4est4} we obtain
\begin{equation}\label{Sigma2est3}
\Sigma_2\ll  X^{1+\varepsilon}D_2^{-\frac{1}{4}}.
\end{equation}

\textbf{Estimation of $\mathbf{\Gamma_2(X)}$}

Summarizing \eqref{DT}, \eqref{Gamma2est2}, \eqref{Gamma2est3}, \eqref{Sigma1est2}
and  \eqref{Sigma2est3}  we get
\begin{equation}\label{Gamma2est4}
\Gamma_2(X)\ll  X^{1+\varepsilon}z^{-\frac{1}{8}}\,.
\end{equation}

\subsection{The end of the proof }
\indent

Bearing in mind \eqref{GammaXdecomp}, \eqref{GammaX1est2}, \eqref{Gamma2est4}
and choosing $z=X^{\frac{8}{9}}$ we establish the asymptotic formula
\eqref{asymptoticformula}.

The theorem is proved.

\vskip20pt
\footnotesize
\begin{flushleft}
S. I. Dimitrov\\
Faculty of Applied Mathematics and Informatics\\
Technical University of Sofia \\
8, St.Kliment Ohridski Blvd. \\
1756 Sofia, BULGARIA\\
e-mail: sdimitrov@tu-sofia.bg\\
\end{flushleft}
\end{document}